\newtheorem{theorem}{Theorem}
\newtheorem{prop}[theorem]{Proposition}
\newtheorem{lemma}[theorem]{Lemma}
\newtheorem{example}[theorem]{Example}
\newtheorem{remark}[theorem]{Remark}
\newcommand{\e}{\varepsilon}
\newcommand{\D}{\mathcal{D}}
\newcommand{\I}{\mathcal{I}}
\newcommand{\RR}{\mathbb{R}}
\newcommand{\ZZ}{\mathbb{Z}}
\newcommand{\NN}{\mathbb{N}}
\newcommand{\udim}{\overline{\dim}}
\newcommand{\ldim}{\underline{\dim}}
\def\a{\alpha}
\def\b{\beta}
\def\g{\gamma}
\def\ux{\underline{x}}
\newcommand{\on}{\mathcal{OFF}_n}
\newcommand{\of}{\mathcal{OFF}}
\newcommand{\fr}{\mathrm{Fr}_a}
\title{On the dimension of iterated sumsets}
\author{J\"{o}rg Schmeling}
\address{J\"{o}rg Schmeling. Centre for Mathematical Sciences. Lund University. Lunds Institute of Technology, Po Box 118, SE--22100 Lund, Sweden}
\email{joerg@maths.lth.se}
\author{Pablo Shmerkin}
\address{Pablo Shmerkin, Centre for Interdisciplinary Computational and Dynamical Analysis and School of Mathematics, Alan Turing Building, University of Manchester, Oxford Road, Manchester M13 9PL, UK}
\email{Pablo.Shmerkin@manchester.ac.uk}
\thanks{Both authors were members of the Mathematical Sciences Research Institute when this project was started. We are grateful to MSRI for the fruitful atmosphere and financial support. P.S. acknowledges support from EPSRC grant EP/E050441/1 and the University of Manchester.}
\begin{document}

\begin{abstract}
Let $A$ be a subset of the real line. We study the fractal dimensions of the $k$-fold iterated sumsets $kA$, defined as
\[
k A = \{ a_1+ \ldots + a_k : a_i \in A\}.
\]
We show that for any non-decreasing sequence $\{ \alpha_k \}_{k=1}^\infty$ taking values in $[0,1]$, there exists a compact set $A$ such that $k A$ has Hausdorff dimension $\alpha_k$ for all $k\ge 1$. We also show how to control various kinds of dimension simultaneously for families of iterated sumsets.

These results are in stark contrast to the Pl\"{u}nnecke-Rusza inequalities in additive combinatorics. However, for lower box-counting dimension, the analogue of the Pl\"{u}nnecke-Rusza inequalities does hold.
\end{abstract}

\maketitle

\section{Introduction and statement of results}

Given sets $A, B$ in some ambient group, let $A+B = \{ a+b : a\in A, b\in B\}$ and $kA = \{ a_1+\ldots, +a_k: a_i\in A\}$ be the arithmetic sum of $A$ and $B$ and the $k$-iterated sum of $A$ respectively. A general principle in additive combinatorics is that if the arithmetic sum of a finite set $A$ with itself is ``small'', then the set $A$ itself has ``additive structure'', and in particular iterated sums and differences such as $A+A+A$ and $A-A$ are also ``small''. One precise formulation of this principle are the Pl\"{u}nnecke-Rusza inequalities, which say that if $A, B$ are two finite subsets of an abelian group and $|A+B|\le K |A|$, then
\[
|n B-m B|\le K^{n+m}|A|.
\]
In particular, taking $B=A$, this result gives a quantitative version of the above principle. The reader is referred to \cite{TaoVu2006} for the precise definitions and statements, as well as general background in additive combinatorics.

In this work we investigate whether similar statements can be made when $A$ is a subset of the real numbers (rather than the integers or a discrete group), and size is measured by some fractal dimension instead of cardinality. We were motivated in particular by the following question: if the Hausdorff dimension of $A+A$ is equal to the Hausdorff dimension of $A$, does it follow that the Hausdorff dimension of $A+A+A$ is also equal to the Hausdorff dimension of $A$? We prove that the answer is negative, in rather dramatic fashion: the sequence $\{\alpha_\ell\}_{\ell=1}^\infty$ of Hausdorff dimensions of the iterated sumsets $\ell A$ can be completely arbitrary, subject to the obvious restrictions of being nondecreasing and taking values in $[0,1]$. In particular, information about the Hausdorff dimension of the sumsets $A, 2A,\ldots, \ell A$ gives no information whatsoever about the Hausdorff dimension of $(\ell+1)A$, other than the trival fact that Hausdorff dimension is monotone. Thus, the fractal world exhibits very different behavior than the discrete world.

More generally, we investigate the possible simultaneous values of Hausdorff, lower box-counting and upper box-counting dimensions of iterated sumsets (The reader is referred to \cite{Falconer1990} for the definitions and basic properties of Hausdorff and box-counting dimensions). This turns out to be a delicate problem - controlling various dimensions at once is substantially harder than controlling just one of them.

We will denote Hausdorff dimension by $\dim_H$, and lower and upper box dimensions by $\ldim_B$ and $\udim_B$, respectively. The following is our main result:

\begin{theorem}\label{main}
Let $\{ \alpha_i\}_{i=1}^\infty$, $\{\beta_i\}_{i=1}^\infty$ and $\{\gamma_i\}_{i=1}^\infty$ be nondecreasing sequences with $0\le\alpha_i \le\beta_i\le \gamma_i\le 1$,
\[
\b_\ell\le\b_{\ell-1}+\b_1-\sum_{k=2}^{\ell-1}(\ell-k)(\b_{k-1}+\b_1 -\b_k)
\]
and
\[
\g_\ell\le\g_{\ell-1}+\g_1-\sum_{k=2}^{\ell-1}(\ell-k)(\g_{k-1}+\g_1 -\g_k).
\]
for all $\ell\ge 2$.

There exists a compact set $A\subset [0,1]$ such that
\[
\dim_H(\ell A) = \alpha_\ell,\qquad \ldim_B(\ell A)=\beta_\ell, \quad\textrm{ and }\quad \udim_B(\ell A)=\gamma_\ell
\]
for $\ell=1,\ldots, \infty$. Additionally, if $\alpha_{\ell}=1$ for some $\ell$, we can also require that $\ell A$ contains an interval.
\end{theorem}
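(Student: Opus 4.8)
The plan is to construct $A$ as a Cantor-type set with a carefully designed sequence of scales, exploiting the fact that Hausdorff dimension is governed by the worst (smallest) scales while upper box dimension is governed by the best (largest) scales. The natural approach is to work with sets of the form $A = \{\sum_{n} a_n b^{-n} : a_n \in D_n\}$, where $b$ is a large integer base and $D_n \subseteq \{0,1,\ldots,b-1\}$ are digit sets whose cardinalities $|D_n|$ vary with $n$. The key observation is that $\ell A$ is again (essentially) a digitally-defined set, with digit sets being the $\ell$-fold sumsets $\ell D_n = D_n + \cdots + D_n \pmod{\text{carries}}$, so that controlling the cardinalities $|\ell D_n|$ at each scale controls all the sumset dimensions simultaneously. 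The required inequalities on $\{\beta_i\}$ and $\{\gamma_i\}$ should correspond exactly to the combinatorial constraints on how fast $|\ell D|$ can grow relative to $|(\ell-1)D|$ and $|D|$ — that is, they encode the admissible growth profiles of sumset cardinalities for a single finite digit set.

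First I would reduce the problem to a purely combinatorial one about finite digit sets: establish a lemma producing, for each target vector of exponents, a finite set $D \subseteq \{0,\ldots,b-1\}$ (for suitable $b$) such that $|\ell D|$ is prescribed for each $\ell$ up to the combinatorial ceiling imposed by the stated inequalities. Here I expect the inequalities $\b_\ell \le \b_{\ell-1}+\b_1 - \sum_{k=2}^{\ell-1}(\ell-k)(\b_{k-1}+\b_1-\b_k)$ to be precisely the feasibility condition for such finite sets to exist, likely proven by an explicit arithmetic-progression-with-gaps construction where the gap structure is tuned to the deficits $\b_{k-1}+\b_1-\b_k$. Second, I would assemble $A$ by concatenating blocks of scales: within a long block of consecutive indices $n$ I use digit sets realizing the box-dimension targets, while interspersing sparser blocks (with very rapidly growing scales so they become negligible for box dimension but dominate Hausdorff dimension via a mass-distribution / Frostman argument) to pull $\dim_H(\ell A)$ down to $\alpha_\ell$ independently. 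The separation of Hausdorff from box dimension is standard in spirit: one makes the ``bad'' scales occur along a sparse subsequence $n_k \to \infty$ so sparse that $\log n_{k+1}/\log n_k \to \infty$, rendering them invisible to $\ldim_B$ and $\udim_B$ while forcing the Hausdorff dimension down.

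The interval case when $\alpha_\ell = 1$ is handled by ensuring that at the relevant stage the digit sets $\ell D_n$ fill out all residues, so that $\ell A$ contains a full base-$b$ subinterval; this is compatible with the construction since $\alpha_\ell = 1$ forces $\beta_\ell = \gamma_\ell = 1$ and hence no sparsification is needed at that level. I would verify the three dimension computations separately: the upper box dimension from $\limsup$ of covering numbers at block scales, the lower box dimension from the corresponding $\liminf$, and the Hausdorff dimension via a Frostman measure supported on $\ell A$ together with a matching upper bound from the natural covers.

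I expect the main obstacle to be the simultaneous control of all three dimensions for all $\ell$ at once, rather than any single dimension for a single $\ell$. The difficulty is that the digit-set cardinalities $|\ell D_n|$ are strongly coupled across different values of $\ell$ (since they all derive from the same $D_n$), so one cannot freely prescribe $\alpha_\ell, \beta_\ell, \gamma_\ell$ independently — the feasibility constraints in the theorem are exactly what make the system solvable, and the hard technical work is to prove the combinatorial lemma asserting that these constraints are sufficient, i.e.\ that digit sets with the prescribed sumset growth actually exist. A secondary difficulty is controlling the carries when passing from $\ell D_n \pmod b$ to the genuine arithmetic sumset $\ell A$: one must choose $b$ large relative to $\ell$ on each block (or restrict digits to a sub-range) so that additions of $\ell$ digits never produce carries that merge or split the expected clusters, thereby guaranteeing that the covering numbers of $\ell A$ genuinely match the products $\prod_n |\ell D_n|$.
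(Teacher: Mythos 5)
Your central device --- a single base-$b$ Cantor set $A=\{\sum_n a_n b^{-n}: a_n\in D_n\}$ whose digit sets $D_n$ simultaneously control all the iterated sumsets --- cannot prove this theorem, because for such a homogeneous set the quantities you want to prescribe independently are provably coupled. First, the covering number of such a set at scale $b^{-n}$ is $\prod_{m\le n}|D_m|$, and the uniform measure on $A$ is a Frostman measure at the exponent $\liminf_n \frac{1}{n\log b}\sum_{m\le n}\log|D_m|$; hence $\dim_H A=\ldim_B A$, and likewise for each $\ell A$ (up to bounded carry multiplicity). So you could never realize $\a_1<\b_1$. Your suggested fix --- placing the sparse scales along a very lacunary subsequence so that they become ``invisible to $\ldim_B$'' --- fails: lower box dimension is itself a $\liminf$, so a sparse run of scales is exactly what $\ldim_B$ detects; only $\udim_B$ ignores it. Second, since $|\ell D_n|\le |D_n|^\ell$ and, by Pl\"{u}nnecke--Ruzsa, $|2D_n|\le K|D_n|$ forces $|\ell D_n|\le K^\ell|D_n|$, a single digit-set construction obeys $\dim_H(\ell A)\le\ell\dim_H(A)$ together with Pl\"{u}nnecke-type bounds in terms of $\dim_H(2A)$. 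The theorem explicitly admits data violating these (e.g.\ $\a_1=\a_2=0$, $\a_3=1$ with $\b_i=\g_i=1$), so the combinatorial lemma you propose --- a single finite $D$ with independently prescribed $|\ell D|$ --- is not merely hard, it is false for the admissible targets. Relatedly, the hypotheses on $\b_\ell,\g_\ell$ are not the feasibility constraints for single-set sumset growth (for upper box dimension there is no Pl\"{u}nnecke constraint at all, as Section \ref{sec:plunnecke} shows); they are artifacts of the particular block construction.

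The missing idea is that $A$ must be a \emph{finite union} of components $A_1,\dots,A_m$, each with its own schedule of sparse and dense blocks of binary digits, interleaved so that (i) $\dim_H A=\max_i\dim_H A_i$ and $\udim_B A=\max_i\udim_B A_i$ by stability under finite unions, while $\ldim_B\left(\bigcup_i A_i\right)$ can strictly exceed every $\ldim_B A_i$ because at every scale some component is dense --- this is how $\a_1<\b_1$ is achieved; and (ii) $\ell A\supseteq A_{i_1}+\dots+A_{i_\ell}$ for every choice of indices, and the schedules are arranged so that for suitable $(i_1,\dots,i_\ell)$ the sparse blocks of the summands never align, making the iterated sumset far larger than any Pl\"{u}nnecke bound on a single component would permit. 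The paper implements this with a small alphabet of block types $t_{\a_i}(k),t_{\b_i}(k),t_{\g_i}(k)$ (runs of forced zeros and free digits in base $2$), verifies the finite-$\ell$ cases by counting frequencies of free digits in sums of blocks, and then concatenates the finite constructions along the dyadic expansion to treat all $\ell$ at once; this last concatenation step, the Frostman lower bounds, the carry control, and the interval case when $\a_\ell=1$ are the parts of your outline that do match the paper.
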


Notice from the above result that, even if a set $A$ has coinciding Hausdorff, lower and upper box dimensions, it is possible that for the sumset $A+A$ all three concepts of dimension differ.

Since the construction of the set $A$ in Theorem \ref{main} is rather complicated, rather than giving a full proof we will present several examples of increasing complexity illustrating different features of the general construction. After these examples we indicate how to put them together to yield Theorem \ref{main}. This will be done in Section \ref{sec:examples}.

Theorem \ref{main} does not negate a result in the spirit of Pl\"{u}nnecke-Rusza for upper or lower box-counting dimension. In Section \ref{sec:plunnecke} we will show that there is indeed a natural extension of the Pl\"{u}nnecke-Rusza estimates for lower box dimension (but not for upper box dimension); see Proposition \ref{prop:positiveresult} for the precise quantitative estimates.


\section{Examples and proof of the main result} \label{sec:examples}

\subsection{Basic facts}

Before proving Theorem~\ref{main}, we will present some simpler but significant examples illustrating the main features of the construction. The construction itself is quite technical and will be sketched at the end of the section.

We will consider the numbers in the unit interval in their base 2 expansion, i.e. to a real number $x\in [0,1]$ we associate a binary infinite sequence $\ux=x_1x_2x_3\cdots $ such that $x_i\in\{0,1\}$ and $\displaystyle x=\sum_{i=1}^\infty\frac{x_i}{2^i}$. This sequence is unique unless $x$ is a dyadic rational, in which case we have exactly 2 representations. It will be apparent from the constructions that this will not affect the dimension calculations (in the case of Hausdorff dimension this is clear since countable sets have zero Hausdorff dimension).

We will use the following notation. If for a given set of sequences $x_1x_2\cdots $ the i-th symbol is not specified -- i.e. it can be chosen to be either 0 or 1 -- we will write $x_i=a$ ($a$ stands for ``arbitrary''). In all our constructions, the basic pieces of the set will be defined in terms of sequences which have $0$ at some positions and $a$ at the rest of the positions.

Before starting the constructions, we recall some basic properties of dimensions. Hausdorff dimension and upper box dimension are stable under finite unions, i.e.
\[
\dim\left(\bigcup_{i=1}^m A_i\right) = \max_{i=1}^m \dim(A_i),
\]
where $\dim$ stands for either $\dim_H$ or $\udim_B$. However, the lower box dimension $\ldim_B$ is \textit{not} stable under finite unions. These facts will be exploited repeatedly in our constructions.

Let $A\subset [0,1]$. For $\ux\in A$, we define
\[
\#_{off} (n,\ux, A):=\begin{cases} 1 & \text{ if } [x_1\cdots x_n a]\cap A\ne\emptyset \\
0 & \text{otherwise} \end{cases},
\]
and
\[
\on (A):=\min_{\ux\in A} \frac1n \sum_{i=1}^n \#_{off}(i,\ux, A).
\]
\begin{lemma}\label{lemma:on}
For any set $A\subset [0,1]$, we have
\[
\liminf_n\on (A)\le\dim_HA.
\]
\end{lemma}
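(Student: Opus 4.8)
The plan is to read $\#_{off}$ as a branching indicator for the binary coding tree of $A$ and then bound $\dim_H A$ from below by a mass distribution (Frostman) argument. Concretely, I would associate to $A$ the tree whose depth-$n$ nodes are the words $w=x_1\cdots x_n$ with $[x_1\cdots x_n]\cap A\neq\emptyset$; the quantity $\#_{off}(n,\ux,A)$ then reads as the indicator that this tree branches at the node $x_1\cdots x_n$, i.e. that the symbol at position $n+1$ is genuinely free within $A$ (both children $[x_1\cdots x_n 0]$ and $[x_1\cdots x_n 1]$ carry points of $A$). Hence $\sum_{i=1}^n\#_{off}(i,\ux,A)$ counts the branch points along $\ux$ down to level $n$, and $\on(A)$ is $1/n$ times the least such count over all $\ux\in A$. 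With this reading it suffices to prove $\dim_H A\ge t$ for every $t<s:=\liminf_n\on(A)$ (if $s=0$ there is nothing to prove).

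Fix such a $t$ and choose $N$ so that $\on(A)\ge t$ for all $n\ge N$; by the definition of the minimum this says that \emph{every} $\ux\in A$ satisfies $\sum_{i=1}^n\#_{off}(i,\ux,A)\ge tn$ once $n\ge N$. I would then equip $A$ with the uniform branching measure $\mu$: starting from mass $1$ at the root, split the mass of each branching node equally between its two children and pass the whole mass of a non-branching node to its unique child. Since $A$ is compact every node of the tree extends to a point of $A$, so $\mu$ is a well-defined Borel probability measure supported on $A$, and a length-$n$ cylinder $[w]$ receives mass $\mu([w])=2^{-b(w)}$, where $b(w)$ is the number of branching ancestors of $w$. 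The heart of the argument is that the \emph{minimum} in the definition of $\on$ furnishes a lower bound on branching that is uniform over all cylinders at once: any length-$n$ word $w$ with $[w]\cap A\neq\emptyset$ is the prefix of some $\ux\in A$, and $b(w)$ agrees with $\sum_{i=1}^n\#_{off}(i,\ux,A)$ up to an additive constant, so $b(w)\ge tn-O(1)$ for $n\ge N$. This yields the Frostman estimate $\mu([w])\le C\,2^{-tn}=C\,|[w]|^t$.

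Since any interval of length comparable to $2^{-n}$ meets a bounded number of dyadic cylinders of that scale, the same estimate holds for balls up to a constant, and the mass distribution principle gives $\dim_H A\ge t$; letting $t\uparrow s$ completes the proof. I expect the genuine content to be conceptual rather than computational: the observation that taking the minimum over $\ux\in A$ is exactly what converts the pointwise branching data into a \emph{uniform} Frostman exponent for the single measure $\mu$. The remaining points are routine and should be dispatched briefly—the $\liminf$ supplies the branching bound only for $n\ge N$, but the finitely many coarser scales are absorbed into the constant $C$, and the non-uniqueness of binary expansions affects only a countable set, which is invisible to Hausdorff dimension.
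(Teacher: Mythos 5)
Your proof is correct and is essentially the paper's own argument: both construct the uniform branching measure that splits mass equally at each branching node, use the minimum in the definition of $\on$ to turn the pointwise branching count into a uniform bound over all cylinders, and conclude via the mass distribution principle. The only cosmetic difference is that the paper phrases the estimate through the lower local dimension of $\mu$ while you write out the explicit Frostman bound $\mu([w])\le C\,|[w]|^t$; your explicit use of compactness (so that $\mu$ is genuinely carried by $A$) is, if anything, slightly more careful than the paper, whose statement nominally allows arbitrary sets.
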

\begin{proof}
Let $\log$ denote the logarithm to base $2$. Then for any measure $\mu$ on the space of binary sequences:
\[
d_\mu(\ux):=\liminf_{\e\to 0}\frac{\log\mu(B(\ux ,\e))}{\log\e}=\liminf_{n\to\infty}-\frac1n\log\mu (C_n(\ux)),
\]
where $C_n(\ux)$ denotes the set of infinite binary sequences starting with $x_1\cdots x_n$. Now let $\mu$ be the measure on $A$ that gives equal weight to any offspring of a given cylinder, i.e. it gives half of the measure if $\#_{off} (i,\ux, A)=1$ and full measure otherwise. Then for any $\ux\in A$ we have
\[
\liminf_{n\to\infty}-\frac1n\log\mu (C_n(\ux))\ge \liminf_n\on (A).
\]
An application of the mass distribution principle (see e.g. \cite[Chapter 4]{Falconer1990}) concludes the proof.
\end{proof}

\subsection{Examples for Hausdorff dimension}

The first example shows how one can control the Hausdorff dimension of simple sumsets.
\begin{example}\label{hdim1}
For $0\le\a_1\le\a_2\le 1$, we construct a compact set $A\subset [0,1]$ such that
\[
\dim_HA=\a_1\qquad \dim_H(A+A)=\a_2.
\]
\end{example}
\begin{proof}[Construction]
First we fix a sufficiently rapidly increasing sequence of natural numbers, say $n_k=2^{2^k}$. The set $A$ will be constructed as a (almost disjoint) union of two sets. Let
\[
A_1:=\left\{\ux \,:\, x_i=\begin{cases} 0 & i\in \left[n_{3k},\left[\frac{n_{3k}}{\a_1}\right]_*-1\right]\\ 0 & i\in \left[n_{3k+2},\left[\frac{n_{3k+2}}{\a_2}\right]_*-1\right]\\ a & \text{otherwise}  \end{cases}\right\},
\]
where $\left[\frac{n_i}{\a_j}\right]_*$ denotes the minimum of the integer part of $\frac{n_i}{\a_j}$ and $n_{i+1}-n_i$ if $\a_j\ne 0$, and $i n_i$ otherwise (we will use this notation for the rest of this construction and the next one). Note that, if $\a_j\neq 0$, then $\left[\frac{n_i}{\a_j}\right]_*$ equals the integer part of $\frac{n_i}{\a_j}$ for all but finitely many values of $i$. The second set is defined (only if $\a_2\ne 0$, otherwise it is empty) as
\[
A_2:=\left\{\ux \,:\, x_i=\begin{cases} 0 & i\in \left[n_{3k+1},\left[\frac{n_{3k+1}}{\a_1}\right]_*-1\right]\\ 0 & i\in \left[n_{3k+2},\left[\frac{n_{3k+2}}{\a_2}\right]_*-1\right]\\ a & \text{otherwise}  \end{cases}\right\}.
\]
If we consider cylinders of length $\left[\frac{n_{3k}}{\a_1}\right]_*$ we see that there are at most $\displaystyle 2^{n_{3k}}$ intersecting $A_1$, and if we consider cylinders of length $\left[\frac{n_{3k+1}}{\a_1}\right]_*$ there are at most $\displaystyle 2^{n_{3k+1}}$ intersecting $A_2$. Hence, $\a_1\ge\ldim_BA_i\ge\dim_HA_i$. On the other hand, $\liminf_n\on(A_i)=\a_1$ since $\a_2\ge\a_1$. Thus Lemma~\ref{lemma:on} gives the lower bound for $\dim_HA$.

For the simple sumset we can argue as follows. Firstly, we have that
\[
2A_1\cup 2A_2\cup (A_1+A_2)\subset\left\{\ux \,:\, x_i=\begin{cases} 0 & i\in \left[n_{3k+2},\left[\frac{n_{3k+2}}{\a_2}\right]_*-2\right]\\ a & \text{otherwise}  \end{cases}\right\}=:B_1
\]
where the ``$-2$'' accounts for a possible carry. Secondly,
\[
A_1+A_2\supset\left\{\ux \,:\, x_i=\begin{cases} 0 & i\in \left[n_{3k+2},\left[\frac{n_{3k+2}}{\a_2}\right]_*-1\right]\\ a & \text{otherwise}  \end{cases}\right\}=:B_2.
\]
For both sets on the right-hand-side we have that
\[
\a_2\le\liminf_n\on (B_i)\le\dim_HB_i\le\ldim_BB_i\le\a_2.
\]
This shows that the set $A=A_1\cup A_2$ has the desired properties.
\end{proof}

The second example shows how one can control the Hausdorff dimension of triple sumsets. This gives an idea about the general induction process.

\begin{example}\label{hdim2}
For $0\le\a_1\le\a_2\le\a_3\le 1$ we construct a compact set $A\subset [0,1]$ such that
\[
\dim_HA=\a_1\qquad \dim_H(A+A)=\a_2\qquad\dim_H(A+A+A)=\a_3.
\]
\end{example}
\begin{proof}[Construction] This example is a modification of the previous one. We just need to add a third component to control the triple sums.

Again we fix a sufficiently rapidly increasing sequence of natural numbers, say $n_k=2^{2^k}$. The set $A$ will be constructed as a (almost disjoint) union of three sets. Let
\[
A_1:=\left\{\ux \,:\, x_i=\begin{cases} 0 & i\in \left[n_{6k},\left[\frac{n_{6k}}{\a_1}\right]_*-1\right]\\ 0 & i\in \left[n_{6k+2},\left[\frac{n_{6k+2}}{\a_2}\right]_*-1\right]\\ 0 & i\in \left[n_{6k+4},\left[\frac{n_{6k+4}}{\a_2}\right]_*-1\right]\\  0 & i\in \left[n_{6k+5},\left[\frac{n_{6k+5}}{\a_3}\right]_*-1\right]\\a & \text{otherwise}  \end{cases}\right\},
\]
\[
A_2:=\left\{\ux \,:\, x_i=\begin{cases} 0 & i\in \left[n_{6k+1},\left[\frac{n_{6k+1}}{\a_1}\right]_*-1\right]\\ 0 & i\in \left[n_{6k+2},\left[\frac{n_{6k+2}}{\a_2}\right]_*-1\right]\\ 0 & i\in \left[n_{6k+3},\left[\frac{n_{6k+3}}{\a_2}\right]_*-1\right]\\  0 & i\in \left[n_{6k+5},\left[\frac{n_{6k+5}}{\a_3}\right]_*-1\right]\\a & \text{otherwise}  \end{cases}\right\},
\]
\[
A_3:=\left\{\ux \,:\, x_i=\begin{cases} 0 & i\in \left[n_{6k},\left[\frac{n_{6k}}{\a_1}\right]_*-1\right]\\ 0 & i\in \left[n_{6k+3},\left[\frac{n_{6k+3}}{\a_2}\right]_*-1\right]\\ 0 & i\in \left[n_{6k+4},\left[\frac{n_{6k+4}}{\a_2}\right]_*-1\right]\\  0 & i\in \left[n_{6k+5},\left[\frac{n_{6k+5}}{\a_3}\right]_*-1\right]\\a & \text{otherwise}  \end{cases}\right\}.
\]
By a similar reasoning as in the previous example, we have that $\dim_HA_1=\dim_HA_2=\dim_HA_3=\dim_HA=\a_1$, where $A=A_1 \cup A_2 \cup A_3$. Also for the simple sumsets we have that
\begin{align*}
\dim_H (A_i+A_i)&\le\dim_H(A_1+A_2)=\dim_H(A_2+A_3)=\dim_H(A_1+A_3)\\&=\dim_H(A+A)=\a_2.
\end{align*}
For the triple sumset, we remark that
\[
\bigcup_{i,j,l=1,2,3}(A_i+A_j+A_l)\subset\left\{\ux \,:\, x_i=\begin{cases} 0 & i\in \left[n_{6k+5},\left[\frac{n_{6k+5}}{\a_3}\right]_*-3\right]\\ a & \text{otherwise}  \end{cases}\right\}=:B_1
\]
where the ``$-3$'' accounts for carryovers. Secondly,
\[
A_1+A_2+A_3\supset\left\{\ux \,:\, x_i=\begin{cases} 0 & i\in \left[n_{6k+5},\left[\frac{n_{6k+5}}{\a_3}\right]_*-1\right]\\ a & \text{otherwise}  \end{cases}\right\}=:B_2.
\]
Again for both sets on the right-hand-side we have that
\[
\a_3\le\liminf_n\on (B_i)\le\dim_HB_i\le\ldim_BB_i\le\a_3.
\]
This shows that the set $A=A_1\cup A_2\cup A_3$ has the desired properties.
\end{proof}

The previous example can clearly be generalized to control the Hausdorff dimension of any finite sumsets, i.e. of sets $A$, $2A$, $\cdots$, $\ell A$ for any $\ell\in\NN$. Moreover, it is easy to see that in Example \ref{hdim2}, if some $\alpha_i=1$ for $i=1,2,3$, then $i A\supset [0,1]$, since the choice of digits becomes completely arbitrary.  Next, we show how to control an infinite number of sumsets.

\begin{example} \label{hausdiminfinite}
Let $\{\alpha_i\}_{i=1}^\infty$ be a non-decreasing sequence taking values in $[0,1]$. Then there exists a compact set $A\subset [0,1]$ such that
\[
\dim_H(\ell A) = \alpha_\ell
\]
for all $\ell\in\mathbb{N}$. Furthermore, if $\alpha_\ell=1$ for some $\ell$, we can also require that $\ell A$ contains an interval.
\end{example}
\begin{proof}[Construction]
In short, the construction consists in pasting together along the dyadic structure all the sets obtained for finite sequences $\a_1,\ldots, \a_\ell$. By proceeding like in Example \ref{hdim2}, we see that for every $\ell\ge 2$, there exists a compact set $A_\ell\subset [0,1]$ such that
\[
\dim_H(i A_\ell) = \alpha_i, \quad i=1,\ldots,\ell.
\]
Let $\{M_\ell\}_{\ell=1}^\infty$ be a rapidly increasing sequence. Set $S_1 = 0$ and $S_\ell = \sum_{i=1}^{\ell-1} M_\ell$ for $\ell>1$, and let
\[
A := \left\{ \ux \,:\, x_{S_\ell+1} \cdots x_{S_{\ell+1}} = y_1 \ldots y_{M_\ell} \text{ for some } \underline{y} \in A_\ell, \, \text{ for each } \ell \in\NN  \right\}.
\]
Roughly speaking, $A$ is defined by following the construction of $A_1$ for the first $M_1$ binary digits, then the construction of $A_2$ for the following $M_2$ binary digits, and so on. Note that $A$ is a countable intersection of compact sets, so it is compact.

We claim that
\[
\dim_H(A) = \liminf_i \dim_H(A_i) = \a_1,
\]
provided $\{M_\ell\}$ grows fast enough. Indeed, by taking $M_\ell$ large enough, we can cover each $A_\ell$ by dyadic intervals $\{ I_\ell^{(r)}\}$ of length at least $2^{-M_\ell}$, satisfying
\[
\sum_r \left|I_\ell^{(r)}\right|^{\a_\ell+1/\ell} < 1.
\]
Then we can cover $A$ by at most $2^{S_i\ell}$ translated and scaled down (by a factor of $2^{-S_\ell}$) copies of the family $\{ I_\ell^{(r)}\}$. Since $M_\ell \gg S_\ell$, this yields the upper bound for $\dim_H(A)$. For the lower bound, we use Frostman's Lemma: there are measures $\mu_\ell$ supported on $A_\ell$, such that
\begin{equation} \label{eq:frostman}
\mu_\ell(I) \le |I|^{\a_\ell-1/\ell},
\end{equation}
for all dyadic intervals of length $|I|\le 2^{-M'_\ell}$, where $M'_\ell \ll M_{\ell-1}$ (making $M_{\ell-1}$ larger if necessary). We can paste all these measures together dyadically in a similar way to the construction of $A$. More precisely,
\begin{equation} \label{eq:pastedmeasure}
\mu(C(x_1\cdots x_{S_\ell})) := \mu_1(C(x_1\cdots x_{M_1})) \cdots \mu_i(C(x_{S_{\ell-1}+1}\cdots x_{S_\ell})),
\end{equation}
where $C(y_1\cdots y_j)$ denotes the set of all dyadic sequences starting with $y_1\cdots y_j$. Combining \eqref{eq:frostman} and \eqref{eq:pastedmeasure} and applying the mass distribution principle yields the lower bound, completing the proof of the claim.

More generally, since addition preserves the dyadic structure except for carryovers, and these are negligible due to the presence of blocks of zeroes in the construction of each $A_\ell$ (unless $\a_\ell=1$), we see that
\[
\dim_H(\ell A) = \liminf_i \dim_H(\ell A_i) = \a_\ell.
\]
Finally, if $\a_\ell=1$ for some $\ell$, then there is no restriction on the dyadic digits of $\ell A_i$ for any $i\ge \ell$, thus there is no restriction on the dyadic digits of $\ell A$ except for finitely many of them. Hence $\ell A$ contains an interval, as desired.
\end{proof}

\subsection{Examples for Hausdorff and box-counting dimensions}

Next, we start controlling various notions of dimension simultaneously. In the first example of this kind we show how to control the Hausdorff and lower box-counting dimension for simple sumsets.

\begin{example}\label{hauslowbox}
Given $0\le \a_i \le b_i \le 1$, $i=1,2$ with $\a_1\le a_2$ and $\b_1\le \b_2 \le 2\b_1$, we construct a compact set $A\subset[0,1]$ with
\[
\dim_H A = \a_1 \qquad \ldim_B A = \b_1,
\]
and
\[
\dim_H(A+A) = \a_2 \qquad \ldim_B(A+A) = \b_2,
\]
\end{example}
\begin{proof}[Construction]
Again we fix a fast increasing sequence
\begin{equation} \label{eq:fastsequence}
n_k = \min\left\{n\in\NN\, :\, n\ge 2^{2^k}\text{ and } (k-1)\vert (n_k-n_{k-1})\right\}.
\end{equation}
For each $k\in\NN$ we define four numbers:
\[
l_k:= [k \b_1 ], \qquad m_k := [k \b_2],
\]
\[
d_i(k):=\begin{cases}
k\left[\frac1kn_k\left(\frac{\beta_i}{\a_i}-1\right)\right] & \text{ if } \a_i\neq 0\\
k n_k & \text{ if } \a_i=0
\end{cases}\,, \quad i=1,2.
\]
Given a word $u$, we let $u^r$ denote the word consisting of $r$ consecutive copies of $u$; in particular, $a^r$ is the word consisting of $r$ consecutive ``arbitrary symbols'' $a$. If $r=0$, then $u^r$ is the empty word. We will define four types of blocks:
\begin{align*}
t_{\a_1}(k):=\Big\{x_{n_k}&\cdots x_{n_{k+1}-1}\, :\, x_{n_k}=\cdots x_{n_k+d_1(k)-1}=0,\\& x_{n_k+d_1(k)}\cdots x_{n_{k+1}-1}=\left(a^{l_k}0^{k-l_k}\right)^{(n_{k+1}-n_k-d_1(k))/k}\Big\},
\end{align*}
\begin{align*}
t_{\a_2}(k):=\Big\{x_{n_k}&\cdots x_{n_{k+1}-1}\, :\, x_{n_k}=\cdots x_{n_k+d_2(k)-1}=0,\\& x_{n_k+d_2(k)}\cdots x_{n_{k+1}-1}=\left(0^{m_k-l_k}a^{l_k}0^{k-m_k}\right)^{(n_{k+1}-n_k-d_2(k))/k}\Big\},
\end{align*}
\[
t_{\b_1}(k):=\left\{x_{n_k}\cdots x_{n_{k+1}-1}\, :\, x_{n_k}\cdots x_{n_{k+1}-1}=\left(a^{l_k}0^{k-l_k}\right)^{(n_{k+1}-n_k)/k}\right\},
\]
\[
t_{\b_2}(k):=\left\{x_{n_k}\cdots x_{n_{k+1}-1}\, :\, x_{n_k}\cdots x_{n_{k+1}-1}=\left(0^{m_k-l_k}a^{l_k}0^{k-m_k}\right)^{(n_{k+1}-n_k)/k}\right\}.
\]
The set $A$ will be defined as the union of six components $A_i$, which are defined as follows:
\[
A_1:=\left\{\ux \, :\, \begin{cases} x_{n_{3k}}\cdots x_{n_{3k+1}-1}\in t_{\a_1}(3k)\\
         x_{n_{3k+1}}\cdots x_{n_{3k+2}-1}\in t_{\a_2}(3k+1)\\
         x_{n_{3k+2}}\cdots x_{n_{3(k+1)}-1}\in t_{\b_1}(3k+2)
\end{cases}\right\},
\]
\[
A_2:=\left\{\ux \, :\, \begin{cases} x_{n_{3k}}\cdots x_{n_{3k+1}-1}\in t_{\b_1}(3k)\\
         x_{n_{3k+1}}\cdots x_{n_{3k+2}-1}\in t_{\a_1}(3k+1)\\
         x_{n_{3k+2}}\cdots x_{n_{3(k+1)}-1}\in t_{\a_2}(3k+2)
\end{cases}\right\},
\]
\[
A_3:=\left\{\ux \, :\, \begin{cases} x_{n_{3k}}\cdots x_{n_{3k+1}-1}\in t_{\a_2}(3k)\\
         x_{n_{3k+1}}\cdots x_{n_{3k+2}-1}\in t_{\b_1}(3k+1)\\
         x_{n_{3k+2}}\cdots x_{n_{3(k+1)}-1}\in t_{\a_1}(3k+2)
\end{cases}\right\},
\]
\[
A_4:=\left\{\ux \, :\, \begin{cases} x_{n_{3k}}\cdots x_{n_{3k+1}-1}\in t_{\a_2}(3k)\\
         x_{n_{3k+1}}\cdots x_{n_{3k+2}-1}\in t_{\a_1}(3k+1)\\
         x_{n_{3k+2}}\cdots x_{n_{3(k+1)}-1}\in t_{\b_2}(3k+2)
\end{cases}\right\},
\]
\[
A_5:=\left\{\ux \, :\, \begin{cases} x_{n_{3k}}\cdots x_{n_{3k+1}-1}\in t_{\b_2}(3k)\\
         x_{n_{3k+1}}\cdots x_{n_{3k+2}-1}\in t_{\a_2}(3k+1)\\
         x_{n_{3k+2}}\cdots x_{n_{3(k+1)}-1}\in t_{\a_1}(3k+2)
\end{cases}\right\},
\]
\[
A_6:=\left\{\ux \, :\, \begin{cases} x_{n_{3k}}\cdots x_{n_{3k+1}-1}\in t_{\a_1}(3k)\\
         x_{n_{3k+1}}\cdots x_{n_{3k+2}-1}\in t_{\b_2}(3k+1)\\
         x_{n_{3k+2}}\cdots x_{n_{3(k+1)}-1}\in t_{\a_2}(3k+2)
\end{cases}\right\}.
\]
Note that block structure of $A_1, A_2$ and $A_3$ follows a cyclic pattern, and that the block structure of $A_{3+i}$ is specular to that of $A_i$, in the sense that blocks $t_{*_i}(k)$ are replaced by $t_{*_{2-i}}(k)$, for $*=\a,\b$.

We will use the following notation. Given a finite word $u$, by $\fr(u)$ we denote the frequency of symbols 'a' in $u$, i.e.
\[
\fr(u)=\frac{|\{ i: u_i=a\}|}{|u|},
\]
where $|u|$ is the length of $u$. A quantity that goes to zero as $k\rightarrow\infty$ will be denoted by $o(1)$.

Notice that all types of blocks have a frequency $\beta_1+o(1)$ of 'a' symbols. Since the blocks $t_{\a_1}(k)$ start with $d_1(k)$ zeroes, we see that if $k=3 l$, then
\[
\of_{n_k+d_1(k)}(A_1) = \a_1+o(1),
\]
and $\on(A_1) \ge \a_1$ for all $n$, so by Lemma \ref{lemma:on} we get
\[
\a_1 \le \dim_H A_1 \le \ldim_B A_1 \le \a_1.
\]
The same argument holds for the other $A_i$, so we obtain $\dim_H A=\a_1$. Next, notice that for each $k$ there is at least one component (in fact two) $A_i$ for which the block $x_{n_k}\cdots x_{n_{k+1}-1}$ is neither of type $t_{\a_1}(k)$ nor $t_{\a_2}(k)$. This shows that $\ldim_B A= \b_1$.

Let us write
\[
t_*(k)+t_{**}(k):=\{x_{n_k}\cdots x_{n_{k+1}-1} = v + w \, :\, v\in t_*(k),\,\, w\in t_{**}(k)\},
\]
where $a+a=a+0=a$ (thus carryovers are ignored, but due to the structure of the blocks they are negligible in the estimates). Note that
\begin{equation} \label{eq:frequencyb2}
\fr(t_*(k)+t_{**}(k)) \le \b_2+o(1)
\end{equation}
for any choice of $*, **$, and therefore $\udim_B(A_i) \le \b_2$. On the other hand, each $A_i+A_j$ contains blocks of either type $t_{\a_1}(k)+t_{\a_2}(k)$ or $t_{\a_2}(k)+t_{\a_2}(k)$. Taking into account the definition of $d_2(k)$, we see that $\dim_H(A_i+A_j) \le \a_2$ for each $i,j$. In the opposite direction, note that since $\b_2\le 2\b_1$, we have
\[
\fr(t_{\b_1}(k)+t_{\b_2}(k))=\b_2+o(1).
\]
Since $A_1+A_4$ has infinitely many blocks $t_{\a_1}(k)+t_{\a_2}(k)$ preceded by $t_{\b_1}(k-1)+t_{\b_2}(k-1)$, and it has no blocks of the form $t_{\a_1}(k)+t_{\a_1}(k)$ nor blocks of the form $t_{\b_1}+t_{\b_1}$, we see that $\liminf_n \on(A_1+A_4) \ge \a_2$. Hence we have shown that
\[
\dim_H(A+A) = \max_{i,j} \dim_H(A_i+A_j) = \dim_H(A_1+A_4) = \a_2.
\]
By \eqref{eq:frequencyb2}, $\udim_B (A_i+A_j)  \le \b_2$, so that
\[
\ldim_B (A+A)\le \udim_B (A+A)= \max_{i,j} \udim_B (A_i+A_j) \le  \b_2.
\]
On the other hand, for each $k$, there is $i\in\{1,2,3\}$ such that the block $x_{n_k}\cdots x_{n_{k+1}-1}$ in $A_i+A_{3+i}$ is of type $t_{\b_1}(k)+t_{\b_2}(k)$; all of these have frequency $\b_2+o(1)$ of 'a's distributed in small (relative to $n_k$) chunks of length $k$, so $\ldim_B(A+A) \ge \b_2$, as desired.
\end{proof}

The next example shows how to control Hausdorff, lower box-counting and upper box-counting dimension at once in simple sumsets.

\begin{example}\label{alldim2}
Given $0\le\a_i\le\b_i\le\g_i\le 1$, $i=1,2$ with $\a_1\le \a_2, \b_1\le \b_2, \g_1\le \g_2$, $\b_2\le 2\b_1$ and $\g_2\le 2\g_1$, we construct a compact set $A\subset [0,1]$ with
\[
\dim_HA=\a_1\qquad\ldim_BA=\b_1\qquad\udim_BA=\g_1,
\]
and
\[
\dim_H(A+A)=\a_2\qquad\ldim_B(A+A)=\b_2\qquad\udim_B(A+A)=\g_2.
\]
In particular if $\a_1=\b_1=\g_1$ the regularity of the set $A$ does not imply the regularity of the sumset $A+A$.
\end{example}
\begin{proof}[Construction] This example will be a modification of the previous one (one can check that in Example \ref{hauslowbox}, $\ldim_B(A)=\udim_B(A)$ and $\ldim_B(A+A)=\udim_B(A+A)$). All we need to do is to add in the construction of each $A_i$ blocks of type $t_{\gamma_1}(k)$ or $t_{\gamma_2}(k)$, at the same positions in each $A_i$, and preceding any blocks of the form $t_{\a_1}(k)$ or $t_{\a_2}(k)$ (to prevent the Hausdorff dimension from dropping too much).

We use the same fast increasing sequence defined in \eqref{eq:fastsequence}. For each $k\in\NN$ we define six numbers. The numbers $l_k, m_k$ are defined exactly like in Example \ref{hauslowbox}, while the numbers $d_i(k)$ are redefined as
\[
d_i(k):= \begin{cases}
k\left[\frac1kn_k\left(\frac{\gamma_i}{\a_i}-1\right)\right] & \text{ if } \a_i\neq 0\\
k n_k & \text{ if } \a_i =0
\end{cases}.
\]
Additionally, we define
\[
p_k:= [ k \g_1 ], \qquad q_k := [ k\g_2].
\]
We will use 6 types of blocks; the blocks $t_{\a_i}(k), t_{\b_i}(k)$, $i=1,2$ are defined just like in the previous example. We additionally define:
\[
t_{\gamma_1}(k):=\left\{x_{n_k}\cdots x_{n_{k+1}-1}\, :\, x_{n_k}\cdots x_{n_{k+1}-1}=\left(a^{p_k}0^{k-p_k}\right)^{(n_{k+1}-n_k)/k}\right\},
\]
\[
t_{\gamma_2}(k):=\left\{x_{n_k}\cdots x_{n_{k+1}-1}\, :\, x_{n_k}\cdots x_{n_{k+1}-1}=\left(0^{q_k-p_k}a^{p_k}0^{k-q_k}\right)^{(n_{k+1}-n_k)/k}\right\}.
\]

The set $A$ will have 6 components $A_i$. The first three are defined as:
\[
A_1:=\left\{\ux \, :\, \begin{cases}
         x_{n_{6k}}\cdots x_{n_{6k+1}-1}\in t_{\g_1}(6k)\\
         x_{n_{6k+1}}\cdots x_{n_{6k+2}-1}\in t_{\a_1}(6k+1)\\
         x_{n_{6k+2}}\cdots x_{n_{6k+3}-1}\in t_{\g_2}(6k+2)\\
         x_{n_{6k+3}}\cdots x_{n_{6k+4}-1}\in t_{\a_2}(6k+3)\\
         x_{n_{6k+4}}\cdots x_{n_{6k+5}-1}\in t_{\g_1}(6k+4)\\
         x_{n_{6k+5}}\cdots x_{n_{6(k+1)}-1}\in t_{\b_1}(6k+5)\\
\end{cases}\right\},
\]
\[
A_2:=\left\{\ux \, :\, \begin{cases}
         x_{n_{6k}}\cdots x_{n_{6k+1}-1}\in t_{\g_1}(6k)\\
         x_{n_{6k+1}}\cdots x_{n_{6k+2}-1}\in t_{\b_1}(6k+1)\\
         x_{n_{6k+2}}\cdots x_{n_{6k+3}-1}\in t_{\g_1}(6k+2)\\
         x_{n_{6k+3}}\cdots x_{n_{6k+4}-1}\in t_{\a_1}(6k+3)\\
         x_{n_{6k+4}}\cdots x_{n_{6k+5}-1}\in t_{\g_2}(6k+4)\\
         x_{n_{6k+5}}\cdots x_{n_{6(k+1)}-1}\in t_{\a_2}(6k+5)\\
\end{cases}\right\},
\]
\[
A_3:=\left\{\ux \, :\, \begin{cases}
         x_{n_{6k}}\cdots x_{n_{6k+1}-1}\in t_{\g_2}(6k)\\
         x_{n_{6k+1}}\cdots x_{n_{6k+2}-1}\in t_{\a_2}(6k+1)\\
         x_{n_{6k+2}}\cdots x_{n_{6k+3}-1}\in t_{\g_1}(6k+2)\\
         x_{n_{6k+3}}\cdots x_{n_{6k+4}-1}\in t_{\b_1}(6k+3)\\
         x_{n_{6k+4}}\cdots x_{n_{6k+5}-1}\in t_{\g_1}(6k+4)\\
         x_{n_{6k+5}}\cdots x_{n_{6(k+1)}-1}\in t_{\a_1}(6k+5)\\
\end{cases}\right\}.
\]
Note that the only difference with the sets in Example \ref{hauslowbox} is the addition of blocks corresponding to the upper box dimension. Likewise, for $i\in\{1,2,3\}$, the sets $A_{i+3}$ are defined in a specular way to $A_i$ like in the previous example, namely blocks of type $t_{*_i}(k)$ are replaced by blocks of type $t_{*_{2-i}}(k)$ for $*=\a,\b$ and $\g$.

One can check that $\dim_H A=\a_1$, $\ldim_B A=\b_1$ just like in Example \ref{hauslowbox} (for the Hausdorff dimension, it is useful to note that  blocks of type $t_{\a_1}(k)$ are always preceded by blocks of type $t_{\g_1}(k-1)$). Since
\[
\fr(t_{\g_i}(k)) = \g_1+o(1),
\]
for $i=1,2$, it follows that $\udim_B(A)=\g_1$.

For the sumset we argue just like in Example \ref{hauslowbox}. For the upper box dimension, all we need to observe is that
\[
\fr(t_{\gamma_1}(k)+t_{\gamma_2}(k)) = \gamma_2 + o(1),
\]
and such blocks occur infinitely often in $A_1+A_4$; any other block $t_*(k)+t_{**}(k)$ has a lower frequency of 'a's. Thus
\[
\dim_H(A+A)=\a_2\qquad \ldim_B(A+A)=\b_2\qquad \udim_B(A+A)=\g_2.
\]
\end{proof}

\begin{example}\label{alldim3}
Suppose $0\le\a_i\le\b_i\le\g_i\le 1$, $i=1,2,3$, and
\[
\b_2\le 2\b_1 \qquad \b_3\le 2\b_2-\b_1,
\]
\[
\g_2\le 2\g_1 \qquad \g_3\le 2\g_2-\g_1.
\]
We construct a compact set $A\subset [0,1]$ with
\[
\dim_HA=\a_1\qquad\ldim_BA=\b_1\qquad\udim_BA=\g_1,
\]
\[
\dim_H(A+A)=\a_2\qquad\ldim_B(A+A)=\b_2\qquad\udim_B(A+A)=\g_2,
\]
and
\[
\dim_H(A+A+A)=\a_3\qquad\ldim_B(A+A+A)=\b_3\qquad\udim_B(A+A+A)=\g_3.
\]
\end{example}
\begin{proof}[Construction]
We fix again the fast increasing sequence $n_k$ given by \eqref{eq:fastsequence}. For each $k\in\NN$ we will define nine numbers. The numbers $l_k, m_k, p_k$ and $q_k$ are defined exactly like in Example \ref{alldim2}. The numbers $d_i(k)$ are also defined in the same way, except that now we also allow the index $i=3$. We also define new numbers:
\[
s_k:= [ k\b_3 ],\qquad v_k := [k\g_3].
\]
We will define nine types of blocks. The blocks $t_{\a_i}(k), t_{\b_i}(k)$ and $t_{\g_i}(k)$, $i=1,2$, are the same as in Example \ref{alldim2}. We further define:
\begin{align*}
t_{\a_3}(k):=\Big\{x_{n_k}&\cdots x_{n_{k+1}-1}\, :\, x_{n_k}=\cdots x_{n_k+d_3(k)-1}=0, \quad\\& x_{n_k+d_3(k)}\cdots x_{n_{k+1}-1}=\\&\left(0^{m_k-l_k}a^{l_k+m_k-s_k}0^{s_k-m_k}a^{s_k-m_k}0^{k-s_k}\right)^{(n_{k+1}-n_k-d_3(k))/k}\Big\},
\end{align*}
\begin{align*}
t_{\b_3}(k):=\Big\{x_{n_k}\cdots &x_{n_{k+1}-1}\, :\, x_{n_k}\cdots x_{n_{k+1}-1}=\\&\left(0^{m_k-l_k}a^{l_k+m_k-s_k}0^{s_k-m_k}a^{s_k-m_k}0^{k-s_k}\right)^{(n_{k+1}-n_k)/k}\Big\},
\end{align*}
\begin{align*}
t_{\g_3}(k):=\Big\{x_{n_k}\cdots &x_{n_{k+1}-1}\, :\, x_{n_k}\cdots x_{n_{k+1}-1}=\\&\left(0^{q_k-p_k}a^{p_k+q_k-v_k}0^{v_k-q_k}a^{v_k-q_k}0^{k-v_k}\right)^{(n_{k+1}-n_k)/k}\Big\}.
\end{align*}
It is easy to verify that $t_{\b_3}(k)$ and $t_{\g_3}(k)$ are well-defined due to the assumptions made on the $\b_i$ and $\g_i$.

The set $A$ will have 18 components $A_i$, which are defined by specifying the types of blocks
\[
x_{n_k}\cdots x_{n_{k+1}-1} = t_*(k)
\]
as follows:
\[
\begin{tabular}{|c|cccccc|cccccc|}
\hline
12k+ &  0 &  1 &   2 &  3 & 4 & 5 & 6 & 7 & 8 & 9 & 10 & 11 \\
\hline
$A_1$ & $\g_1$ & $\a_1$ & $\g_2$ & $\a_2$ & $\g_3$ & $\a_3$ & $\g_1$ & $\a_1$ & $\g_2$ & $\a_2$ & $\g_3$ & $\b_1$ \\
$A_2$ & $\g_1$ & $\a_1$ & $\g_2$ & $\a_2$ & $\g_3$ & $\a_3$ & $\g_1$ & $\a_1$ & $\g_2$ & $\b_1$ & $\g_3$ & $\a_3$ \\
$A_3$ & $\g_1$ & $\a_1$ & $\g_2$ & $\a_2$ & $\g_3$ & $\a_3$ & $\g_1$ & $\b_1$ & $\g_2$ & $\a_2$ & $\g_3$ & $\a_3$ \\
$A_4$ & $\g_1$ & $\a_1$ & $\g_2$ & $\a_2$ & $\g_3$ & $\b_1$ & $\g_1$ & $\a_1$ & $\g_2$ & $\a_2$ & $\g_3$ & $\a_3$ \\
$A_5$ & $\g_1$ & $\a_1$ & $\g_2$ & $\b_1$ & $\g_3$ & $\a_3$ & $\g_1$ & $\a_1$ & $\g_2$ & $\a_2$ & $\g_3$ & $\a_3$ \\
$A_6$ & $\g_1$ & $\b_1$ & $\g_2$ & $\a_2$ & $\g_3$ & $\a_3$ & $\g_1$ & $\a_1$ & $\g_2$ & $\a_2$ & $\g_3$ & $\a_3$ \\
\hline
$A_7$ & $\g_2$ & $\a_2$ & $\g_3$ & $\a_3$ & $\g_1$ & $\a_1$ & $\g_3$ & $\a_3$ & $\g_1$ & $\a_1$ & $\g_2$ & $\b_2$ \\
$A_8$ & $\g_2$ & $\a_2$ & $\g_3$ & $\a_3$ & $\g_1$ & $\a_1$ & $\g_3$ & $\a_3$ & $\g_1$ & $\b_2$ & $\g_2$ & $\a_2$ \\
$A_9$ & $\g_2$ & $\a_2$ & $\g_3$ & $\a_3$ & $\g_1$ & $\a_1$ & $\g_3$ & $\b_2$ & $\g_1$ & $\a_1$ & $\g_2$ & $\a_2$ \\
$A_{10}$ & $\g_2$ & $\a_2$ & $\g_3$ & $\a_3$ & $\g_1$ & $\b_2$ & $\g_3$ & $\a_3$ & $\g_1$ & $\a_1$ & $\g_2$ & $\a_2$ \\
$A_{11}$ & $\g_2$ & $\a_2$ & $\g_3$ & $\b_2$ & $\g_1$ & $\a_1$ & $\g_3$ & $\a_3$ & $\g_1$ & $\a_1$ & $\g_2$ & $\a_2$ \\
$A_{12}$ & $\g_2$ & $\b_2$ & $\g_3$ & $\a_3$ & $\g_1$ & $\a_1$ & $\g_3$ & $\a_3$ & $\g_1$ & $\a_1$ & $\g_2$ & $\a_2$ \\
\hline
$A_{13}$ & $\g_3$ & $\a_3$ & $\g_1$ & $\a_1$ & $\g_2$ & $\a_2$ & $\g_2$ & $\a_2$ & $\g_3$ & $\a_3$ & $\g_1$ & $\b_3$ \\
$A_{14}$ & $\g_3$ & $\a_3$ & $\g_1$ & $\a_1$ & $\g_2$ & $\a_2$ & $\g_2$ & $\a_2$ & $\g_3$ & $\b_3$ & $\g_1$ & $\a_1$ \\
$A_{15}$ & $\g_3$ & $\a_3$ & $\g_1$ & $\a_1$ & $\g_2$ & $\a_2$ & $\g_2$ & $\b_3$ & $\g_3$ & $\a_3$ & $\g_1$ & $\a_1$ \\
$A_{16}$ & $\g_3$ & $\a_3$ & $\g_1$ & $\a_1$ & $\g_2$ & $\b_3$ & $\g_2$ & $\a_2$ & $\g_3$ & $\a_3$ & $\g_1$ & $\a_1$ \\
$A_{17}$ & $\g_3$ & $\a_3$ & $\g_1$ & $\b_3$ & $\g_2$ & $\a_2$ & $\g_2$ & $\a_2$ & $\g_3$ & $\a_3$ & $\g_1$ & $\a_1$ \\
$A_{18}$ & $\g_3$ & $\b_3$ & $\g_1$ & $\a_1$ & $\g_2$ & $\a_2$ & $\g_2$ & $\a_2$ & $\g_3$ & $\a_3$ & $\g_1$ & $\a_1$ \\
\hline
\end{tabular}
\]
Arguing like in the previous examples we see that
\[
\dim_H A = \a_1 \qquad \ldim_B A = \b_1 \qquad \udim_B A = \g_1.
\]
For the sumset $A+A$, notice that each pair $A_i+A_j$ contains infinitely many blocks of the form $t_{\a_*}(k)+ t_{\a_{**}}(k)$, where $*$ and $**$ are either $1$ or $2$ (possibly $*=**$). Hence, again arguing like in the previous constructions, $\dim_H(A) \le \a_2$. On the other hand, for $A_1+A_7$ all blocks of the form $t_{\a_1}(k)+t_{\a_2}(k)$ are preceded by blocks of the form $t_{\g_1}(k-1)+t_{\g_2}(k-1)$, and there are infinitely many such blocks.  Also, there are no blocks of the form $t_{*_1}+ t_{*_1}$ for $*=\a,\b$ or $\g$. From this we deduce that
\[
\dim_H(A+A)\ge \dim_H(A_1+A_7) \ge \a_2.
\]
For the lower box dimension, we note that for each $k$, there is an $i\in \{1,\ldots, 7\}$ such that $A_i + A_{i+6}$ contains a block of type either $t_{\b_1}(k)+ t_{\b_2}(k)$ or $t_{\g_1}(k)+ t_{\g_2}(k)$, so that $\ldim_B(A+A) \ge \b_2$. On the other hand, if $k= 12 l+1$, then
\[
\fr(t_*(k)+t_{**}(k)) \le \b_2,
\]
for all possible occurrences of $*$ and $**$ (to see this in the case $*=\b_1$ or $\b_2$ and $**=\b_3$, one needs to make use of the assumption $\b_3 \le 2\b_2-\b_1$). Hence $\ldim_B(A+A) \le \b_2$. It is easy to check that $\udim_B(A+A) = \g_2$.

Finally, if we consider $A+A+A$, we see that for each $i, j, k$ there are infinitely many blocks of the form $t_{\a_*}(k)+ t_{\a_{**}}(k)+t_{\a_{***}}(k)$ in $A_i+ A_j + A_k$, where $*, **, ***\in \{1,2,3\}$. This implies that $\dim_H(A+A+A) \le \a_3$. On the other hand, $A_1+A_7+A_{13}$ contains infinitely many blocks of the form $t_{\a_1}(k)+t_{\a_2}(k)+t_{\a_3}(k)$, all of them preceded by blocks of the form $t_{\g_1}(k-1)+t_{\g_2}(k-1)+t_{\g_3}(k-1)$; moreover, any block in $A_1+A_7+A_{13}$ is of the form $t_{*_1}(k)+ t_{**_2}(k) + t_{***_3}(k)$. It follows that $\dim_H(A+A+A) \ge \a_3$.

The arguments for $\ldim_B(A+A+A)$ and $\udim_B(A+A+A)$ are just like for the sums $A+A$.

\end{proof}

\subsection{Proof of the main result}

\begin{proof}[Proof of Theorem \ref{main}]
The result is proved by combining the examples above, in particular Examples \ref{hausdiminfinite} and \ref{alldim3}. Since the actual construction is quite complicated, we sketch the main ideas, leaving the details to the interested reader.

Example \ref{alldim3} can be generalized to $\ell$-sumsets in a straightforward way if $\ell$ is a prime number (and there is no loss of generality in assuming this). We need $(\ell-1)\ell^2$ components $A_i$, $3\ell$ different types of blocks, and we have to control the sequences for $2(\ell-1)\ell$ consecutive $n_k$'s. The restrictions on the dimensions that arise are precisely those stated in the theorem.

Given $\ell\ge 2$, let $A_\ell\subset [0,1]$ be a compact set such that
\[
\dim_H(i A_\ell) = \a_i,\qquad \ldim_B(i A_\ell) = \b_i,\qquad \udim_B(i A_\ell) = \g_i,
\]
for $i=1,\ldots,\ell$.

We are going to combine the sets $A_\ell$ exactly like in Example \ref{hausdiminfinite}. Let $\{M_\ell\}_{\ell=1}^\infty$ be a rapidly increasing sequence. Write $S_\ell = \sum_{i=1}^\ell M_i$, with $S_1=0$, and define
\[
A := \left\{ \ux \,:\, x_{S_\ell+1} \cdots x_{S_{\ell+1}} = y_1 \ldots y_{M_\ell} \text{ for some } \underline{y} \in A_\ell, \, \text{ for each } \ell \in\NN  \right\}.
\]
The set $A$ is clearly compact, and its structure translates to any finite sumset $\ell A$, apart from carryovers which can be ignored. Therefore we get
\begin{align*}
\dim_H(\ell A) &= \liminf_{i\rightarrow\infty} \dim_H(\ell A_i) = \a_\ell,\\
\ldim_B(\ell A) &= \liminf_{i\rightarrow\infty} \ldim_B(\ell A_i) = \b_\ell,\\
\udim_B(\ell A) &= \limsup_{i\rightarrow\infty} \udim_B(\ell A_i) = \g_\ell,
\end{align*}
provided $M_\ell$ grows quickly enough (this was proved for Hausdorff dimension in Example \ref{hausdiminfinite}; the proof for box dimensions is similar but easier). If $\a_\ell=1$ for some $\ell$, then obviously $\b_\ell=\g_\ell=1$ as well, and one can check that there is no restriction in the digits of $\ell A_i$ for all $i\ge \ell$. Thus $A$ has no restriction on all but finitely many of its binary digits, and therefore it contains an interval. This concludes the sketch of the proof.
\end{proof}

\begin{remark}
Recall that for any two bounded sets $A, B\subset \mathbb{R}$ we have
\[
\udim(A+B) \le \udim(A\times B) \le \udim(A) + \udim(B),
\]
but no such inequality holds for the lower box dimension in general. In contrast, for sumsets $A+A$ we do have a bound
\[
\ldim_B(A+A)\le\ldim_B(A\times A)\le 2\ldim_B A,
\]
since we can cover the product by squares coming from a cover of the components approximating the lower box dimension. Besides these ``product'' bounds, there are ``Pl\"{u}nnecke'' bounds between the different $\b_\ell$; see Proposition \ref{prop:positiveresult} in Section \ref{sec:plunnecke}. Thus finding the most general possible relations between the sequences $\a_n$, $\b_n$ and $\g_n$ appears to be rather difficult.

\end{remark}


\section{Pl\"{u}nnecke estimates for box-counting dimensions} \label{sec:plunnecke}

We begin by observing that in Theorem \ref{main}, if $\g_2=\g_1$, then necessarily $\g_\ell=\g_1$ for all $\ell$, so this theorem does not directly negate the possibility of a ``Pl\"{u}nnecke estimate'' for the upper box dimensions. However, it is possible to modify the construction to obtain counterexamples. We indicate how to show that for any $0<\gamma<1$ there exists a compact set $A\subset [0,1]$ such that
\[
\udim_B A = \udim_B(A+A) = \g,
\]
but
\[
\udim_B(A+A+A) = \min(1,3\g/2) > \g.
\]
Recall from Example \ref{alldim3} that there exist compact sets $A', A''\subset [0,1]$ such that
\[
\udim_B(A') = \g/2,\quad \udim_B(A'+A') = \g,\quad \udim_B(A'+A'+A') = \min(1,3\g/2),
\]
\[
\udim_B(A'') = \g,\quad \udim_B(A''+A'') = \g,\quad \udim_B(A''+A''+A'') = \g.
\]
Moreover, these sets are constructed by specifying types of blocks for finite sequences of binary digits $x_{n_k}\cdots x_{n_{k+1}-1}$, where $\{n_k\}$ is a rapidly increasing sequence. Now let $M_r$ be another rapidly increasing sequence, say $M_r = 2^{2^r}$. The set $A$ is defined by using the blocks corresponding to $A'$ for all $k\in [ M_{2 r -1}, M_{2r})$ for some $r$, and the blocks corresponding to $A''$ for $k\in [M_{2r}, M_{2r+1})$ for some $r$. It is then easy to check that
\[
\udim_B(i A) = \max(\udim_B(i A'),\udim_B(i A'')) \quad i=1,2,3,
\]
which yields the claim.

We finish the paper with the positive result mentioned in the introduction, which bounds the lower box dimension of iterated sumsets $\ell B$ in terms of the lower box dimensions of $A$ and $A+B$. The proof is a straightforward discretization argument using the Pl\"{u}nnecke-Rusza inequalities.

\begin{prop} \label{prop:positiveresult}
Let $A, B\subset \RR$ be bounded sets. Then for all $\ell\ge 2$,
\[
\ldim_B(\ell B) \le \ell\ldim_B(A+B) - (\ell-1)\ldim_B A.
\]
In particular, if $\ldim_B(A+A)=\ldim_B A$, then
\[
\ldim_B(\ell A) = \ldim_B A
\]
for all $\ell\in\NN$.
\end{prop}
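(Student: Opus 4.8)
The plan is to discretize at a small scale $\delta$, transfer everything to finite subsets of $\ZZ$, invoke the Pl\"{u}nnecke--Rusza inequality quoted in the introduction, and then pass to the limit. Throughout, for a bounded set $E\subset\RR$ and $\delta>0$ write $N(E,\delta)$ for the number of intervals $[j\delta,(j+1)\delta)$, $j\in\ZZ$, that meet $E$, so that $\ldim_B E=\liminf_{\delta\to 0}\log N(E,\delta)/\log(1/\delta)$. First I would fix $\delta$ and identify each occupied mesh interval with its index, associating to $A$ and $B$ the finite sets $\mathcal{A},\mathcal{B}\subset\ZZ$ with $|\mathcal{A}|=N(A,\delta)$ and $|\mathcal{B}|=N(B,\delta)$. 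The key observation is that addition is compatible with this discretization up to bounded error: if $a$ lies in interval $j$ and $b$ in interval $k$, then $a+b$ lies in interval $j+k$ or $j+k+1$, and more generally a sum of $\ell$ elements of $B$ falls into one of $\ell$ consecutive intervals determined by the corresponding indices. This yields two-sided comparisons
\[
N(A+B,\delta)\asymp|\mathcal{A}+\mathcal{B}|,\qquad N(\ell B,\delta)\asymp|\ell\,\mathcal{B}|,
\]
with implied constants depending only on $\ell$ (at most $2$ and $\ell$, respectively).

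Next I would apply the Pl\"{u}nnecke--Rusza inequality. Setting $K=|\mathcal{A}+\mathcal{B}|/|\mathcal{A}|$, so that $|\mathcal{A}+\mathcal{B}|=K|\mathcal{A}|$, the inequality from the introduction with $n=\ell$, $m=0$ gives $|\ell\,\mathcal{B}|\le K^{\ell}|\mathcal{A}|=|\mathcal{A}+\mathcal{B}|^{\ell}/|\mathcal{A}|^{\ell-1}$. Combining this with the comparisons above and taking logarithms to base $1/\delta$, I obtain, for every sufficiently small $\delta$,
\[
\frac{\log N(\ell B,\delta)}{\log(1/\delta)}\le \ell\,\frac{\log N(A+B,\delta)}{\log(1/\delta)}-(\ell-1)\,\frac{\log N(A,\delta)}{\log(1/\delta)}+o(1),
\]
where the $o(1)$ absorbs the bounded discretization constants, contributing $O(1/\log(1/\delta))$.

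The remaining task is to take $\liminf$ as $\delta\to 0$, and this is where the one genuinely delicate point lies: the right-hand side is a \emph{difference}, and $\liminf$ is only superadditive. Fortunately superadditivity works in our favour here. Writing $u(\delta)$ and $v(\delta)$ for the two terms $\ell\log N(A+B,\delta)/\log(1/\delta)$ and $(\ell-1)\log N(A,\delta)/\log(1/\delta)$, the identity $u=(u-v)+v$ gives $\liminf u\ge\liminf(u-v)+\liminf v$, i.e. $\liminf(u-v)\le\liminf u-\liminf v=\ell\,\ldim_B(A+B)-(\ell-1)\ldim_B A$. Applying $\liminf$ to the displayed inequality then yields $\ldim_B(\ell B)\le \ell\,\ldim_B(A+B)-(\ell-1)\ldim_B A$, as desired. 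I expect this to be the main obstacle precisely because it is subtle rather than computational: it is exactly the fact that the subtracted quantity $\ldim_B A$ enters through a $\liminf$, matching the superadditivity, that makes the estimate valid. One should note that the analogous manipulation would go the wrong way if $\udim_B$ (a $\limsup$) were subtracted, which is consistent with the failure of a Pl\"{u}nnecke bound for upper box dimension indicated earlier.

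Finally, for the ``in particular'' statement I would take $B=A$: the hypothesis $\ldim_B(A+A)=\ldim_B A$ collapses the right-hand side to $\ell\,\ldim_B A-(\ell-1)\ldim_B A=\ldim_B A$, giving $\ldim_B(\ell A)\le\ldim_B A$. The matching lower bound is trivial, since fixing $a_0\in A$ yields the translate $A+(\ell-1)a_0\subseteq\ell A$, and lower box dimension is translation invariant, whence $\ldim_B(\ell A)\ge\ldim_B A$. Together these give equality for all $\ell\in\NN$.
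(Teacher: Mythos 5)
Your proposal is correct and follows essentially the same route as the paper: discretize at scale $\delta$ so that covering numbers become cardinalities of finite subsets of $\ZZ$ compatible with addition up to constants depending only on $\ell$, apply the Pl\"{u}nnecke--Rusza inequality, and pass to the limit. The only (minor) divergence is in the last step: the paper extracts a subsequence $j_r$ along which the count for $A+B$ converges to its $\liminf$ and then bounds the $A$-term by a $\limsup$ over that subsequence, whereas you invoke superadditivity of $\liminf$ directly; both are valid and you correctly identify this as the one delicate point.
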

\begin{proof}
Let $\D_{j,1}$ be the family $\{ [i 2^{-j}, (i+1) 2^{-j}]:i\in \ZZ\}$ of dyadic intervals of length $2^{-j}$, and for $\ell \ge 2$ let
\[
\D_{j,\ell}  = \{ [ i 2^{-j}, (i+\ell) 2^{-j} ] :
i\in \ZZ \}.
\]
Given a set $A\subset\RR$, write $\D_{j,\ell}(A)$ for the intervals in $\D_{j,\ell}$ having nonempty intersection with $A$. Note that for any sets $A_1,\ldots, A_\ell\subset\RR$ and any $j\ge 1$, if $b \in A_1+\ldots+A_\ell$, then there are $\I_i \in \D_{j,1}(A_i)$ such that $b\in I_1+\ldots+I_\ell \in \D_{j,\ell}(A_1+\ldots+A_k)$. Since $b$ belongs to at most $\ell+1$ elements of $\D_{j,\ell}$, we have
\begin{equation} \label{eq:dyadicsumsetbound}
|\D_{j,\ell}(A_1+\ldots+A_\ell)| \le (\ell+1)|\D_{j,1}(A_1) + \ldots + \D_{j,1}(A_\ell)|.
\end{equation}
Moreover, it is easy to see that for a fixed $\ell$ we have
\begin{equation} \label{eq:boxdimusinggendyadic}
\ldim_B(A) = \liminf_{j\rightarrow\infty}
\frac{\log|\D_{j,\ell}(A)|}{j}.
\end{equation}
(Recall that $\log$ is the base $2$ logarithm).

The Pl\"{u}nnecke-Rusza Theorem says that if $E, F$
are finite subsets of $\ZZ$ with $|E+F| < K |E|$, then $|\ell F| <
K^{\ell}|E|$; see \cite[Section 6.5]{TaoVu2006} for the proof and further background. We
apply this result to $E=\D_{j,1}(A), F=\D_{j,1}(B)$ and use
\eqref{eq:dyadicsumsetbound} to obtain
\begin{equation} \label{eq:applicationplunnecke}
|\D_{j,\ell}(B)| \le (\ell+1)\left(
\frac{|\D_{j,2}(A+B)|}{|\D_j(A)|} \right)^{\ell} |\D_j(A)|.
\end{equation}
Let $j_r\rightarrow\infty$ be a sequence such that
\[
\lim_{r\rightarrow\infty} \frac{\log|\D_{j_r,2}(A+B)|}{j_r} = \ldim_B(A+B).
\]
Using \eqref{eq:applicationplunnecke}, we conclude that
\begin{align*}
\ldim_B(\ell B) &\le \liminf_{r\rightarrow\infty}  \frac{\log |\D_{j_r,\ell}(B)|}{j_r} \\
&\le \ell \liminf_{r\rightarrow\infty} \frac{\log|\D_{j_r,2}(A+B)|}{j_r}  - (\ell-1) \limsup_{r\rightarrow\infty} \frac{\log|\D_{j_r}(A)|}{j_r}\\
&\le \ell \ldim_B(A+B) - (\ell-1) \ldim_B B.
\end{align*}
\end{proof}

\end{document}